\let\theoremstyle\@undefined                        % undefine \theoremstyle
\newtheorem{nntheorem}{\bf Theorem}
\newtheorem{nnassumption}{\bf Assumption}
\newtheorem{nndefinition}{\bf Definition}
\newtheorem{nnlemma}{\bf Lemma}
\newtheorem{nncorollary}{\bf Corollary}
\newtheorem{nnproposition}{\bf Proposition}
\newtheorem{nexample}{\bf Example}
\newenvironment{nnexample}
{\begin{nexample}\rm}{\end{nexample}}
\newtheorem{nnremark}{\bf Remark}
\def\NN{\mathbb{N}}
\def\epsilon{\varepsilon}
\title{\LARGE \bf
Stabilization of a linear Korteweg-de Vries equation\\
with a saturated internal control
}
\author{Swann Marx$^{1}$, Eduardo Cerpa$^{2}$, Christophe Prieur$^{3}$ and Vincent Andrieu$^{4}$% <-this % stops a space
\thanks{$^{1}$Swann Marx is with Gipsa-lab, Department of Automatic Control, Grenoble Campus, 11 rue des Math\'ematiques, BP 46, 38402 Saint Martin d'H\`eres Cedex,
        {\tt\small marx.swann@gmail.com} This work has been partially supported by the LabEx PERSYVAL-Lab (ANR-11-LABX-0025-01), by  the ANR project LimICoS contract number 12-BS03-005-01, by MathAmsud COSIP, by CONICYT grant ACT 1106
Basal Project FB0008 and by AC3E, UTFSM.}%
\thanks{$^{2}$Eduardo Cerpa is with  Departamento de Matem\'atica, Universidad T\'ecnica Federico
Santa Mar\'ia, Avda. Espa\~na 1680, Valpara\'iso, Chile 
{\tt\small eduardo.cerpa@usm.cl}.}%
\thanks{$^{3}$Christophe Prieur is with Gipsa-lab, Department of Automatic Control, Grenoble Campus, 11 rue des Math\'ematiques, BP 46, 38402 Saint Martin d'H\`eres Cedex,
   {\tt\small christophe.prieur@gipsa-lab.fr}.}
   \thanks{$^{4}$Vincent Andrieu is with  Universit\'e Lyon 1 CNRS UMR 5007 LAGEP, France and Fachbereich C - Mathematik und Naturwissenschaften, Bergische Universit\"at Wuppertal, Gau\ss stra\ss e 20, 42097 Wuppertal, Germany {\tt\small vincent.andrieu@gmail.com}.}
}
\begin{document}

\maketitle
%\thispagestyle{empty}
%\pagestyle{empty}

%%%%%%%%%%%%%%%%%%%%%%%%%%%%%%%%%%%%%%%%%%%%%%%%%%%%%%%%%%%%%%%%%%%%%%%%%%%%%%%%
\begin{abstract}
This article deals with the design of saturated controls in the context of partial differential equations. It is focused on a linear Korteweg-de Vries equation, which is a mathematical model of waves on shallow water surfaces. In this article, we close the loop with a saturating input that renders the equation nonlinear. The well-posedness is proven thanks to the nonlinear semigroup theory. The proof of the asymptotic stability of the closed-loop system  uses a Lyapunov function. %together with a sector condition describing the saturating input.
\end{abstract}

%%%%%%%%%%%%%%%%%%%%%%%%%%%%%%%%%%%%%%%%%%%%%%%%%%%%%%%%%%%%%%%%%%%%%%%%%%%%%%%

%\addtolength{\textheight}{-12cm}   % This command serves to balance the column lengths
                                  % on the last page of the document manually. It shortens
                                  % the textheight of the last page by a suitable amount.
                                  % This command does not take effect until the next page
                                  % so it should come on the page before the last. Make
                                  % sure that you do not shorten the textheight too much.

\section{Introduction}\label{introduction}

%to be cited: \cite{tucsnak2009observation}, \cite{brezis1973operateurs}, \cite{luo1999stabinf}.

In recent decades, a great effort has been made to take into account input saturations in control designs (see e.g \cite{tarbouriech2011book_saturating} or \cite{zacc2003antiwindupLMI}). Indeed, in most of systems, actuators are limited due to some physical constraints and the control input has to be bounded. Neglecting the amplitude actuator limitation can be source of undesirable and catastrophic behaviors for the closed-loop system. The standard method follows a two steps design. First the design is carried out without taking into account the saturation. In a second step, a nonlinear analysis of the closed loop system is made when adding the saturation. In this way, we often get local stabilization results. Tackling this particular nonlinearity in the case of finite dimensional systems is already a difficult problem. However, nowadays, numerous techniques are now available (see e.g. \cite{tarbouriech2011book_saturating,teel1992globalsaturation,sussmann1991saturation}) and such systems 
 can be analyzed with an appropriate Lyapunov function and a sector condition of the saturation map, as introduced in \cite{tarbouriech2011book_saturating}. 

To the best of our knowledge, there are few papers studying this topic in the infinite dimensional case. Among them, we find \cite{lasiecka2002saturation} and more recently \cite{prieur2014wave_saturating}, where a wave equation equipped with a saturated distributed actuator is considered. Note that saturation function can be defined with a sign function, which is also used in sliding mode control design theory. The interest reader can refer to \cite{pisano2011tracking,cristofaro2014robust}, where a wave and a reaction-diffusion equations are stabilized with a sliding mode controller. The present paper aims at contributing to the study of the saturated input case in the framework of partial differential equations.
 
Let us note that in \cite{slemrod1989mcss} the case of a priori bounded feedback is studied for abstract linear systems. To be more specific, for compact control operators, some conditions are derived to deduce, from the asymptotic stability of an infinite-dimensional linear system in abstract form, the asymptotic stability when closing the loop with saturating controller (see \cite[Theorem 5.1]{slemrod1989mcss} for a precise statement of this result). The aim of our article is to study a particular partial differential equation without seeing it as an abstract control system and without checking the very specific assumptions of \cite{slemrod1989mcss}.
 
The Korteweg-de Vries equation (KdV for short)
\begin{equation}
y_t+y_{x}+y_{xxx}+yy_x=0,
\end{equation}
is a mathematical model of waves on shallow water surfaces. Its controllability and stabilizability properties have been deeply studied in the case with no constraints on the control, as explained in 
 \cite{cerpa2013control, bible_coron, rosier-zhang}. In this article, we focus on the following controlled linear KdV equation
\begin{equation}
\label{kdv-semiperiodic}
\left\{
\begin{split}
&y_t+y_x+y_{xxx}+f=0,\: (x,t)\in [0,L]\times[0,+\infty),\\
&y(t,0)=y(t,L)=y_x(t,L)=0,\: t\in[0,+\infty),\\
%&y_x(t,L)=0,\: t\in[0,+\infty),\\
&y(0,x)=y_0(x),\: x\in [0,L],
\end{split}
\right.
\end{equation}
where $y$ stands for the state and $f$ for the control. As studied in \cite{rosier1997kdv}, if $f=0$ and
\begin{equation*}
L\in \left\{ 2\pi\sqrt{\frac{k^2+kl+l^2}{3}}\,\Big\slash \,k,l\in\mathbb{N}^*\right\},
\end{equation*}
then, there exist solutions of \eqref{kdv-semiperiodic} for which the energy does not decay to zero. For instance, if $L=2\pi$ and $y_0=1-\cos(x)$ for all $x\in [0,L]$, then $y(t,x)=1-\cos(x)$ is a stationary solution of \eqref{kdv-semiperiodic} conserving the energy for any time $t$. In the literature there are some methods stabilizing the KdV equation \eqref{kdv-semiperiodic} with boundary  \cite{cerpa2009rapid, cerpa_coron_backstepping, marx-cerpa} or internal controls \cite{perla-vasconcellos-zuazua, pazoto2005localizeddamping}. Here we focus on the internal control case. In fact, as proven in \cite{perla-vasconcellos-zuazua, pazoto2005localizeddamping}, the feedback control $f(t,x)=a(x)y(t,x)$, where $a=a(x)$ is a positive function whose support is a nonempty open subset of $(0,L)$, makes the origin an exponentially stable state. 

The question we want to address is the following. Given a feedback control $f$ stabilizing the equation, what do we get if we saturate it? Is the equation still stable? We deal with the case in which $f(t,x)=ay(t,x)$ with a positive constant $a$ and we show that the origin is asymptotically stable for the closed-loop system with a saturated input. 

This article is organized as follows. In Section \ref{main}, we present our main results about the well posedness and the stability of this equation in presence of saturation. Section \ref{proof} is devoted to prove these results by using the nonlinear semigroup theory and Lyapunov techniques. In Section \ref{simulation}, we give some simulations of the equation looped by a saturated feedback. Section \ref{part5} collects some concluding remarks and possible further research lines. 

\textbf{Notation:} $y_t$ (resp. $y_x$) stands for the partial derivative of the function $y$ with respect to $t$ (resp. $x$) (this is a shortcut for $\frac{\partial y}{\partial t}$, resp. $\frac{\partial y}{\partial x}$). $\mathfrak{R}$ (resp. $\mathfrak{I}$) denotes the real (resp. imaginary) part of a complex number. Given $L>0$, $\Vert \cdot\Vert_{L^2(0,L)}$ denotes the norm in $L^2(0,L)$ and  $H^3(0,L)$ is the set of all functions $u\in L^2(0,L)$ such that $u_x, u_{xx},u_{xxx}\in L^2(0,L)$. Finally $H_0^1(0,L)$ is the closure in $L^2(0,L)$ of the set of smooth functions that are vanishing at $x=0$ and $x=L$. It is equipped with the norm $\Vert u\Vert^2_{H_0^1(0,L)}:=\int_0^L |u_x|^2dx$. The associate inner products are denoted $\langle \cdot,\cdot\rangle_{L^2(0,L)}$ and $\langle \cdot,\cdot\rangle_{H_0^1(0,L)}$. $H^3_L(0,L)$ denotes the set $H^3_L(0,L):=\lbrace w\in H^3(0,L),\: w(0)=w(L)=w^\prime(L)=0\rbrace$.

\section{Main results}\label{main}
For any $a>0$, if we take $f(t,x):=ay(t,x)$ in \eqref{kdv-semiperiodic}, then we get that the equation is stabilized. Indeed, any solution of
\begin{equation}
\label{linear-KdV-controlled}
\left\{
\begin{split}
&y_t+y_{xxx}+y_x+ay=0,\\
&y(t,0)=y(t,L)=0,\\
&y_x(t,L)=0,
\end{split}
\right.
\end{equation}
satisfies
\begin{equation}
\begin{split}
\frac{1}{2}\frac{d}{dt}\int_0^L |y(t,x)|^2 dx=&-\frac{1}{2}|y_x(t,0)|^2-a\int_0^L |y(t,x)|^2 dx\\
\leq & -a\int_0^L |y(t,x)|^2 dx,
\end{split}
\end{equation} 
which ensures an exponential stability with respect to the $L^2(0,L)$-norm. Note that the decay rate can be selected as large as we want by tuning the parameter $a$. Such a result is refered to as a rapid stabilization result.  

Let us assume now that the control is constrained and that we have to consider the following feedback law
\begin{equation}
f(t,x)=a\cdot \texttt{sat}(y(t,x))
\end{equation}
where the function $\texttt{sat}$ is defined by

\begin{equation}
\label{function-saturation}
\texttt{sat}(s)=\left\{
\begin{array}{rl}
-u_{\min}&\text{if }s< -u_{\min},\\
s&\text{if }-u_{\min}\leq s\leq u_{\max},\\
u_{\max}&\text{if } s> u_{\max}.
\end{array}
\right.
\end{equation}

To ease the lecture, we assume same levels of saturation, which means that $u_{\max}=u_{\min}=u_0$. 

%Introducing the function $\varphi$ called dead-zone, defined by
%\begin{equation}
%\varphi(s):=\texttt{sat}(s)-s,
%\end{equation}
We can write the KdV equation controlled by a saturated control as follows
\begin{equation}
\label{KdV-saturated}
\left\{
\begin{split}
&y_t+y_{xxx}+y_x+a\texttt{sat}(y)=0,\\
&y(t,0)=y(t,L)=0,\\
&y_x(t,L)=0,\\
&y(0,x)=y_0(x).
\end{split}
\right.
\end{equation}

Let us state the main results of this paper.
\begin{nntheorem}[Well-posedness]
\label{theorem-wp}
For any initial condition $y_0\in H_L^3(0,L)$, there exists a unique strong continuous solution $y:[0,\infty)\rightarrow H_L^3(0,L)$ to (\ref{KdV-saturated}) that is continuous from $[0,\infty)$ to $H_L^3(0,L)$ and continuously differentiable from $[0,\infty)$ to $L^2(0,L)$.

Moreover, for any initial condition $y_0$ in $L^2(0,L)$, there exists a unique weak solution $y:[0,\infty)\rightarrow L^2(0,L)$ to (\ref{KdV-saturated}) that is continuous from $[0,\infty)$ to $L^2(0,L)$.
\end{nntheorem}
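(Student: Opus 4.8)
The plan is to recast \eqref{KdV-saturated} as an abstract Cauchy problem $\dot y = \mathcal Ay$ on $L^2(0,L)$ for the nonlinear operator $\mathcal Ay := -y_{xxx}-y_x-a\,\texttt{sat}(y)$ with domain $D(\mathcal A)=H_L^3(0,L)$, and to prove that $-\mathcal A$ is maximal monotone, so that both conclusions follow from the nonlinear semigroup theory of Brezis and Kato. I would split $\mathcal A=A_0+F$, where $A_0y:=-y_{xxx}-y_x$ is the linear KdV operator on $D(A_0)=H_L^3(0,L)$ and $F(y):=-a\,\texttt{sat}(y)$ is the saturation term viewed as a Nemytskii map on $L^2(0,L)$.

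First I would record the dissipativity of each piece. Integrating by parts twice and using $w(0)=w(L)=w_x(L)=0$ gives $\langle A_0 w,w\rangle_{L^2(0,L)}=-\tfrac12|w_x(0)|^2\le 0$, exactly mirroring the energy estimate already displayed for \eqref{linear-KdV-controlled}, so $A_0$ is dissipative; its range condition $R(I-\lambda A_0)=L^2(0,L)$ for $\lambda>0$ is precisely the statement that $A_0$ generates the contraction $C_0$-semigroup of the linear equation, as established in \cite{rosier1997kdv}. Since $\texttt{sat}$ is nondecreasing and globally Lipschitz with constant one, $F$ satisfies $\langle F(y_1)-F(y_2),y_1-y_2\rangle_{L^2(0,L)}=-a\int_0^L(\texttt{sat}(y_1)-\texttt{sat}(y_2))(y_1-y_2)\,dx\le 0$ together with $\|F(y_1)-F(y_2)\|_{L^2(0,L)}\le a\|y_1-y_2\|_{L^2(0,L)}$, so $F$ is a monotone, globally Lipschitz, everywhere-defined perturbation.

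The crux of the argument is maximality, i.e. $R(I-\lambda\mathcal A)=L^2(0,L)$. Because $A_0$ is m-dissipative and $F$ is monotone, Lipschitz and defined on all of $L^2(0,L)$, the classical perturbation theorem for maximal monotone operators yields that $A_0+F=\mathcal A$ is again m-dissipative; concretely, solving $y-\lambda\mathcal Ay=g$ reduces, for $\lambda$ small, to a Banach fixed point in $L^2(0,L)$ built on the linear resolvent $(I-\lambda A_0)^{-1}$, and m-dissipativity for one such $\lambda$ propagates to all $\lambda>0$. With $-\mathcal A$ maximal monotone, the nonlinear semigroup theory then gives, for $y_0\in D(\mathcal A)=H_L^3(0,L)$, a unique strong solution with $y(t)\in H_L^3(0,L)$ for all $t$ and $y\in W^{1,\infty}_{loc}([0,\infty);L^2(0,L))$; I would promote this to $y\in C([0,\infty);H_L^3(0,L))\cap C^1([0,\infty);L^2(0,L))$ by using the decomposition $A_0y=\dot y-F(y)$, the continuity of $t\mapsto F(y(t))$ into $L^2(0,L)$ (which follows from $y\in C([0,\infty);L^2(0,L))$ and the Lipschitz bound on $F$), and the closedness of $A_0$. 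For $y_0\in L^2(0,L)=\overline{D(\mathcal A)}$, the contraction estimate lets me pass to the limit along $H_L^3$-approximations of $y_0$ to obtain the unique weak (mild) solution in $C([0,\infty);L^2(0,L))$.

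The main obstacle is the range condition underlying maximality: everything rests on $I-\lambda A_0$ being onto $L^2(0,L)$ with preimage in $H_L^3(0,L)$, i.e. the unique solvability in $H_L^3(0,L)$ of the third-order boundary value problem $y-\lambda(y_{xxx}+y_x)=g$ under the mixed conditions $y(0)=y(L)=y_x(L)=0$. I would settle this by invoking \cite{rosier1997kdv} directly, or, for a self-contained treatment, through a Lax--Milgram or Faedo--Galerkin argument on a suitable weak formulation. By comparison the dissipativity computation and the monotone-Lipschitz perturbation are routine; the only other delicate point is checking that the generic time-regularity delivered by the nonlinear semigroup can indeed be upgraded, via the linear-plus-Lipschitz structure, to the continuous $H_L^3$-valued and $C^1$ $L^2$-valued regularity claimed in the statement.
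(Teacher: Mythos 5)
Your proposal is correct, and it follows the same overall architecture as the paper --- show that the nonlinear operator is dissipative, establish the range condition for $I-\lambda A$, and invoke nonlinear semigroup theory (the paper uses Miyadera's generation theorem plus Brezis's regularity theorem) --- but it diverges at the one step that carries real technical content, namely the range condition. The paper proves surjectivity of $I-\lambda A$ by freezing the argument of $\texttt{sat}$, solving the resulting linear third-order boundary value problem, and running a \emph{Schauder} fixed-point argument: multiplier estimates (multiplying the equation by $z$ and by $xz$) bound the $H_0^1(0,L)$-norm of the image, and the compactness of the embedding $H_0^1(0,L)\hookrightarrow L^2(0,L)$ supplies the compact convex set that Schauder requires. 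You instead exploit the fact that $y\mapsto -a\,\texttt{sat}(y)$ is a monotone, globally Lipschitz, everywhere-defined Nemytskii map and apply the standard Lipschitz-perturbation theorem for m-dissipative operators, i.e.\ a \emph{Banach} fixed point built on the non-expansive resolvent $(I-\lambda A_0)^{-1}$ for $\lambda a<1$, followed by the usual propagation of the range condition to all $\lambda>0$. Both are legitimate. Yours is lighter --- no compactness, no multiplier identities, and uniqueness of the resolvent comes for free from the contraction --- whereas the paper's Schauder route is the one that would survive if the feedback nonlinearity were merely continuous and monotone rather than Lipschitz. Your approach does shift the burden onto the m-dissipativity of the linear operator $A_0$ (surjectivity of $I-\lambda A_0$ from $H_L^3(0,L)$ onto $L^2(0,L)$), which you rightly flag as the point to be imported or proved; the paper relies on essentially the same input, in the form of the compact-resolvent and spectrum-in-$i\mathbb{R}$ facts for $\tilde A$ taken from the KdV literature, so neither treatment is more self-contained there. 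Finally, the upgrade to $C([0,\infty);H_L^3(0,L))\cap C^1([0,\infty);L^2(0,L))$ is obtained in the paper by citing Brezis's Theorem 3.1; your sketch via the closedness of $A_0$ and the continuity of $t\mapsto \texttt{sat}(y(t,\cdot))$ in $L^2(0,L)$ sits at the same level of detail and is acceptable for the same reason.
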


\begin{nntheorem}[Asymptotic stability]
\label{theorem-s}
For any constant $a>0$, the equation \eqref{KdV-saturated} is globally asymptotically stable. More precisely, the following property holds. For any initial condition $y_0$ in $L^2(0,L)$, the weak solution to (\ref{KdV-saturated}) satisfies 
\begin{equation}
\Vert y(t,\cdot)\Vert_{L^2(0,L)}\leq \Vert y_0\Vert_{L^2(0,L)},\, \forall t\geq 0,
\end{equation}
together with the attractivity property
\begin{equation}
\label{convergence-property}
\Vert y(t,\cdot)\Vert_{L^2(0,L)}\rightarrow 0,\:\text{ as }t\rightarrow \infty.
\end{equation}
\end{nntheorem}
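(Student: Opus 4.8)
The plan is to use the natural $L^2$ energy as a Lyapunov function and to combine the dissipation it produces with a compactness argument. Set $V(t):=\frac12\Vert y(t,\cdot)\Vert_{L^2(0,L)}^2$. I would first carry out the analysis for a strong solution (initial datum in $H^3_L(0,L)$), where all the integrations by parts below are legitimate by Theorem~\ref{theorem-wp}, and recover the $L^2$ statement for a general $y_0\in L^2(0,L)$ at the very end by a density argument together with the continuity of the weak solution with respect to the initial datum. Differentiating $V$ along \eqref{KdV-saturated}, substituting $y_t=-y_{xxx}-y_x-a\,\texttt{sat}(y)$ and integrating by parts twice, the boundary conditions $y(t,0)=y(t,L)=y_x(t,L)=0$ kill every boundary term except one, yielding
\begin{equation}
\dot V(t)=-\frac12|y_x(t,0)|^2-a\int_0^L y(t,x)\,\texttt{sat}(y(t,x))\,dx .
\end{equation}

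The second ingredient is the sign property of the saturation: for every $s\in\RR$ one has $s\,\texttt{sat}(s)\ge 0$, with equality if and only if $s=0$. Hence $\dot V(t)\le 0$, which gives at once the non-increasing (hence bounded) estimate $\Vert y(t,\cdot)\Vert_{L^2(0,L)}\le\Vert y_0\Vert_{L^2(0,L)}$ claimed in the theorem. In particular $V$ is monotone and bounded below, so it converges to some $V_\infty\ge 0$, and integrating the identity above on $[0,\infty)$ yields the summability
\begin{equation}
\int_0^\infty\left(\frac12|y_x(t,0)|^2+a\int_0^L y(t,x)\,\texttt{sat}(y(t,x))\,dx\right)dt\le V(0)<\infty ,
\end{equation}
which forces a time sequence $t_n\to\infty$ along which $\int_0^L y(t_n,x)\,\texttt{sat}(y(t_n,x))\,dx\to 0$.

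To upgrade this to the attractivity \eqref{convergence-property} I would argue by sequential compactness, which is the concrete form taken here by LaSalle's invariance principle (the set $\{\dot V=0\}$ reducing to $\{0\}$ by the strict sign property). The goal is to show that $\{y(t_n,\cdot)\}_n$ is relatively compact in $L^2(0,L)$, extract a subsequence $y(t_n,\cdot)\to y^\ast$ in $L^2(0,L)$, and pass to the limit in the continuous, bounded functional $y\mapsto\int_0^L y\,\texttt{sat}(y)\,dx$ (using $0\le y\,\texttt{sat}(y)\le u_0|y|$ and a.e. convergence) to obtain $\int_0^L y^\ast\,\texttt{sat}(y^\ast)\,dx=0$, whence $y^\ast=0$. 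Then $V(t_n)=\frac12\Vert y(t_n,\cdot)\Vert_{L^2(0,L)}^2\to 0$, and since $V$ is monotone with limit $V_\infty$, this forces $V_\infty=0$, i.e. $V(t)\to 0$.

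The main obstacle is the compactness step. I would obtain it from the smoothing properties of the linear KdV equation: multiplying \eqref{KdV-saturated} by the classical multiplier $x\,y$ and integrating, while using that the saturated term is harmless since $\Vert\texttt{sat}(y)\Vert_{L^2(0,L)}\le\Vert y\Vert_{L^2(0,L)}$, produces an a priori bound of the form $\int_0^T\Vert y(t,\cdot)\Vert_{H^1(0,L)}^2\,dt\le C\big(\Vert y_0\Vert_{L^2(0,L)}\big)(1+T)$. A pigeonhole/density argument then lets me select the times $t_n$ so that, in addition to the property above, $\{y(t_n,\cdot)\}_n$ stays bounded in $H^1(0,L)$; the compact embedding $H^1(0,L)\hookrightarrow L^2(0,L)$ finally delivers the required relative compactness. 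The delicate points on which I expect to spend most effort are that $\texttt{sat}$ is only Lipschitz and not $C^1$, so the multiplier estimates must be justified on strong solutions and only then transferred, and the closing density passage from strong to weak solutions that extends both inequalities to every $y_0\in L^2(0,L)$.
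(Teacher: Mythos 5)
Your proposal is correct, and for the decisive attractivity step it takes a genuinely different route from the paper. Both proofs start identically: the energy $E=\frac12\Vert y\Vert^2_{L^2(0,L)}$, the dissipation inequality $\dot E\le -a\int_0^L y\,\texttt{sat}(y)\,dx\le 0$ (you also keep the boundary term $-\frac12|y_x(t,0)|^2$, which the paper drops), the sector property $s\,\texttt{sat}(s)\ge 0$ with equality only at $s=0$, and a final density argument from $D(A)$ to $L^2(0,L)$. Where you diverge is in how compactness is produced and how the limit is identified. The paper proves that the embedding of $D(A)$, with the graph norm, into $L^2(0,L)$ is compact (its Lemma 4, via the chain of estimates bounding $\Vert u'\Vert_{L^2}$ by the graph norm), combines this with the fact that both $\Vert y(t)\Vert_{L^2}$ and $\Vert Ay(t)\Vert_{L^2}$ are non-increasing along strong solutions of the contraction semigroup, concludes precompactness of the trajectory, and then invokes LaSalle's invariance principle for nonlinear semigroups (citing Slemrod) to reduce to solutions with $\dot E\equiv 0$. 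You instead integrate the dissipation in time to extract a sequence $t_n$ along which $\int_0^L y(t_n)\texttt{sat}(y(t_n))\,dx\to 0$, obtain compactness from the Kato smoothing effect of the $xy$ multiplier (the same identity the paper uses in \eqref{lastuce}--\eqref{lastuce2}, but deployed on the evolution rather than on the resolvent equation), and close with the monotonicity of $E$ alone, never needing the invariance of the $\omega$-limit set. Your route is more PDE-specific and arguably more elementary in its functional-analytic input; the paper's is more abstract and reusable for other m-dissipative perturbations. One refinement you should make explicit: the global bound $\int_0^T\Vert y\Vert^2_{H^1}\,dt\le C(1+T)$ is too weak for the pigeonhole selection as stated, since averages over $[0,T]$ do not let you pick $t_n$ that simultaneously make the dissipation term vanish; you should instead record the local-in-time form $\int_T^{T+1}\Vert y_x\Vert^2_{L^2}\,dt\le C\bigl(\Vert y_0\Vert_{L^2}\bigr)$ uniformly in $T$, which the same multiplier computation gives because $\Vert y(T)\Vert_{L^2}$ is non-increasing, and then run the Chebyshev/pigeonhole selection on each unit interval $[n,n+1]$. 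With that adjustment the argument is complete.
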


\begin{nnremark}
The exponential stability of the closed-loop system with a saturating control is an open problem for the KdV equation.\end{nnremark}

\section{Proof of Theorems \ref{theorem-wp} and \ref{theorem-s}}\label{proof}

\subsection{Well-posedness (Theorem \ref{theorem-wp})}

Let $A$ denote the operator $$Aw=(-w'-w'''-a\texttt{sat}(w))$$ on the domain 
$D(A)\subset L^2(0,L)$ defined such that $D(A):=H^3_L(0,L)$. 

%$$ D(A):=\left\{w\in
%%H^3(0,L) \,\slash\, w(0)=w(L)=w^\prime(L)=0 \right\}.$$

\begin{nnlemma}
\label{lemma-closedness}
Operator $A$ is  closed.
\end{nnlemma}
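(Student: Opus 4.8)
The plan is to show that the operator $A$ is closed by verifying the definition directly: given a sequence $(w_n)$ in $D(A) = H^3_L(0,L)$ such that $w_n \to w$ in $L^2(0,L)$ and $Aw_n \to g$ in $L^2(0,L)$, I must show that $w \in D(A)$ and $Aw = g$. The key structural observation is that the nonlinear term $a\,\texttt{sat}(w)$ is harmless: the saturation map is globally Lipschitz (indeed $1$-Lipschitz), so $\|\texttt{sat}(w_n)-\texttt{sat}(w)\|_{L^2(0,L)} \leq \|w_n - w\|_{L^2(0,L)} \to 0$. Hence $a\,\texttt{sat}(w_n) \to a\,\texttt{sat}(w)$ in $L^2(0,L)$, and the whole difficulty reduces to the linear third-order differential part.

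First I would rewrite the convergence $Aw_n \to g$ as
\begin{equation*}
-w_n' - w_n''' \to g + a\,\texttt{sat}(w),\quad \text{in } L^2(0,L),
\end{equation*}
using the Lipschitz estimate above. Setting $h := g + a\,\texttt{sat}(w) \in L^2(0,L)$, it remains to show that the purely linear operator $B w := -w' - w'''$ on the domain $H^3_L(0,L)$ is closed, since then $w \in H^3_L(0,L) = D(A)$ and $Bw = h$, which gives $Aw = Bw - a\,\texttt{sat}(w) = g$ as desired. The closedness of $B$ is a standard fact for this constant-coefficient operator with the given boundary conditions; I would establish it by showing that the graph norm $\|w\|_{L^2}^2 + \|Bw\|_{L^2}^2$ is equivalent to the $H^3(0,L)$-norm on $H^3_L(0,L)$, so that a sequence Cauchy in the graph norm is Cauchy in $H^3$.

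Concretely, I would argue that control of $\|w\|_{L^2}$ and $\|w' + w'''\|_{L^2}$ yields control of $\|w'''\|_{L^2}$, and hence of the full $H^3$-norm via standard interpolation (Gagliardo--Nirenberg) inequalities together with the boundary conditions $w(0)=w(L)=w'(L)=0$. The lower-order term $w'$ is dominated by the third-order term after absorbing an interpolation constant. Therefore the sequence $(w_n)$, being Cauchy in $L^2$ with $(Bw_n)$ Cauchy in $L^2$, is Cauchy in $H^3(0,L)$; by completeness it converges in $H^3$ to some limit which must coincide with $w$, and the closed subspace $H^3_L(0,L)$ is preserved under $H^3$-limits since the trace maps $w \mapsto (w(0), w(L), w'(L))$ are continuous on $H^3(0,L)$. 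This places $w \in D(A)$ and, passing to the limit, $Bw = h$, hence $Aw = g$.

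The main obstacle I anticipate is making rigorous the equivalence of the graph norm of $B$ with the $H^3$-norm, in particular the a priori estimate bounding $\|w'''\|_{L^2}$ in terms of $\|Bw\|_{L^2}$ and $\|w\|_{L^2}$ while correctly exploiting the boundary conditions to handle the first-order term $w'$ and to rule out any loss coming from the kernel of the differential expression. Everything else—the reduction via the Lipschitz saturation and the continuity of the trace maps—is routine, so the heart of the argument is this elliptic-type regularity/closedness estimate for the linear KdV spatial operator.
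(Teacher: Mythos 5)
Your proposal is correct and follows essentially the same route as the paper: reduce to the linear operator $Bw=-w'-w'''$ via the global Lipschitz continuity of $\texttt{sat}$ (a Lipschitz perturbation of a closed operator is closed), then invoke the closedness of $B$ on $H^3_L(0,L)$. The only difference is that the paper simply asserts the linear part is ``already closed,'' whereas you sketch an a priori graph-norm/$H^3$-norm equivalence to justify it; that estimate is indeed available (the paper derives essentially the same bound later, in the precompactness lemma, via the multiplier $xu$), so your plan is sound.
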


\begin{proof}
Let $\{u_n\}_{n\in\NN}$ be a sequence in $D(A)$ such that
\begin{equation}
\lim_{n\rightarrow +\infty} u_n=u
\end{equation}
 and
\begin{equation}
\lim_{n\rightarrow +\infty} Au_n=v
\end{equation}
for some $u,v\in L^2(0,L)$. To prove that $A$ is closed, we have to prove that $u\in D(A)$ and that $Au=v$. Let us note that
\begin{equation}
\label{oplin}
\tilde{A}: w\in D(A)\subset L^2(0,L)\mapsto (-w^\prime-w^{\prime\prime\prime})\in L^2(0,L)
\end{equation}
is already closed in $D(A)$. Moreover, we know that the function $\texttt{sat}$ is  globally Lipschitz\footnote{Indeed, we know from \cite[Page 73]{bible_khalil} that for all $(y,\tilde{y})\in L^2(0,L)^2$ and for all $x\in [0,L]$, $|\texttt{sat}(y(x))-\texttt{sat}(\tilde{y}(x))|\leq |y(x)-\tilde{y}(x)|$. Thus we get $\Vert \texttt{sat}(y)-\texttt{sat}(\tilde{y})\Vert_{L^2(0,L)}\leq \Vert y-\tilde{y}\Vert_{L^2(0,L)}$.}. Thus\footnote{Given $T_1$ closed and $T_2$ globally Lipschitz, and $u_n\rightarrow u$ and $(T_1+T_2)u_n\rightarrow w$, we have $|T_1u_n+T_2u-w|\leq |T_1u_n+T_2u_n-w|+|T_2u_n-T_2u|$. Thus, the left member of the inequality is bounded by a term which converges to $0$.} $A$ is closed.
\end{proof}

\begin{nnlemma}
\label{lemma-dissipativity}
Operator $A$ is dissipative.
\end{nnlemma}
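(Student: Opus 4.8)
The plan is to show directly that $\langle Aw, w\rangle_{L^2(0,L)} \leq 0$ for every $w \in D(A) = H^3_L(0,L)$, since this is the standard criterion for dissipativity of an operator on a Hilbert space. First I would write out the inner product explicitly, splitting $A$ into its linear transport/dispersion part and the saturation part:
\begin{equation*}
\langle Aw, w\rangle_{L^2(0,L)} = -\int_0^L w' w\, dx - \int_0^L w''' w\, dx - a\int_0^L \texttt{sat}(w)\, w\, dx.
\end{equation*}
The strategy is to handle the three terms separately, using integration by parts on the first two and a sign argument on the third.

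For the first two integrals I would integrate by parts and invoke the boundary conditions $w(0)=w(L)=w'(L)=0$ built into $D(A)$. The transport term $\int_0^L w' w\, dx = \frac{1}{2}[w^2]_0^L$ vanishes because $w(0)=w(L)=0$. The dispersion term is the crucial one: integrating $\int_0^L w''' w\, dx$ by parts twice, the boundary contributions involve $w''w$, $w'w'$ and their endpoint values; using $w(0)=w(L)=0$ and $w'(L)=0$, everything collapses except a term $\frac{1}{2}|w'(0)|^2 \geq 0$, so this contributes $-\frac{1}{2}|w'(0)|^2 \leq 0$. This matches the energy computation already displayed for the unsaturated equation \eqref{linear-KdV-controlled}, so I expect the same boundary-term bookkeeping to carry through.

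The third term is where the saturation enters, and it is handled by a pointwise sign argument rather than integration by parts. The key observation is that $\texttt{sat}$ is a monotone nondecreasing odd-type function with $\texttt{sat}(s)$ having the same sign as $s$ for every $s$; hence $\texttt{sat}(s)\cdot s \geq 0$ pointwise. Therefore $-a\int_0^L \texttt{sat}(w)\,w\, dx \leq 0$ since $a>0$. Combining the three contributions gives
\begin{equation*}
\langle Aw, w\rangle_{L^2(0,L)} = -\tfrac{1}{2}|w'(0)|^2 - a\int_0^L \texttt{sat}(w)\,w\, dx \leq 0,
\end{equation*}
which establishes dissipativity. The main obstacle, such as it is, lies in justifying the integration by parts and the vanishing of boundary terms rigorously for $w \in H^3_L(0,L)$ (so that traces of $w$, $w'$, $w''$ make sense), but this is routine given the Sobolev regularity; the sign of the saturation term is immediate from the definition \eqref{function-saturation}.
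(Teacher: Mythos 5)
Your computation is correct as far as it goes, but it proves the wrong statement. You establish $\langle Aw,w\rangle_{L^2(0,L)}\leq 0$ for a single $w\in D(A)$, which is the dissipativity criterion for a \emph{linear} operator. Here $A$ is nonlinear (because of the saturation term), and the notion of dissipativity needed for the nonlinear semigroup generation theorem invoked afterwards (Miyadera's theory, cited as \cite[Theorem 4.2]{miyadera1992nl_sg}) is the two-point monotonicity condition
\begin{equation*}
\mathfrak{R}\left\{\langle Au-A\tilde{u},\,u-\tilde{u}\rangle_{L^2(0,L)}\right\}\leq 0\qquad\text{for all }u,\tilde{u}\in D(A).
\end{equation*}
Your inequality is only the special case $\tilde{u}=0$ (using $A0=0$), and it does not imply the two-point version for a nonlinear operator; without the latter, the contraction property of the semigroup and the resolvent estimates downstream do not follow. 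This is exactly what the paper proves instead.

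The repair is short and uses an ingredient you already mention but do not exploit. Apply your integration-by-parts bookkeeping to the difference $u-\tilde{u}$: the first- and third-derivative terms are linear, so the same boundary-term computation gives the contribution $-|u'(0)-\tilde{u}'(0)|^2\leq 0$. For the saturation term you need not the sign property $\texttt{sat}(s)\,s\geq 0$ but the monotonicity of $\texttt{sat}$, namely $\mathfrak{R}\{(\texttt{sat}(s)-\texttt{sat}(\tilde{s}))\overline{(s-\tilde{s})}\}\geq 0$ for all $s,\tilde{s}$ (the paper extends $\texttt{sat}$ to complex arguments componentwise to make sense of this), which combined with $a>0$ makes that contribution nonpositive as well. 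You noted that $\texttt{sat}$ is monotone nondecreasing, so the missing step is to use that fact on differences rather than only the sign of $\texttt{sat}(s)\,s$ at a single point.
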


\begin{proof}
Let us consider
\begin{equation}
\texttt{sat}_{\mathbb{C}}(s):=\texttt{sat}(\mathfrak{R}(s))+i\texttt{sat}(\mathfrak{I}(s))
\end{equation}
which we will denote by $\texttt{sat}(s)$ to ease the notation.

Given $u,\tilde{u}\in D(A)$, we have that $$\psi(u,\tilde{u}):=\langle Au-A\tilde{u},u-\tilde{u} \rangle_{L^2(0,L)}$$ is equal to 
\begin{equation}
\begin{split}
\psi(u,\tilde{u})= &-\int_0^L \left(u^{\prime\prime\prime}(x)+u^\prime (x)+a\texttt{sat}(u(x))\right.\\
&\left.-(\tilde{u}^{\prime\prime\prime}(x)+\tilde{u}^\prime (x)+a\texttt{sat}(u(x))\right).(\overline{(u-\tilde{u})}(x)) dx\\
= &-a\int_0^L (\texttt{sat}(u)-\texttt{sat}(\tilde{u}))(\overline{u-\tilde{u}})dx\\
&-\int_0^L (u^{\prime\prime\prime}-\tilde{u}^{\prime\prime\prime})(\overline{u-\tilde{u}})dx
\end{split}
\end{equation}
Integrating by parts $\int_0^L (u^{\prime\prime\prime}-\tilde{u}^{\prime\prime\prime})(\overline{u-\tilde{u}})$, we get
\begin{equation}
\begin{split}
\mathfrak{R}\left\{ \int_0^L (u^{\prime\prime\prime}-\tilde{u}^{\prime\prime\prime})(\overline{u-\tilde{u}})\right\}= -|u'(0)|^2
\leq  0.
\end{split}
\end{equation} 
Then we have
\begin{equation}
\begin{split}
&\mathfrak{R}\left\{\langle Au-A\tilde{u},u-\tilde{u} \rangle_{L^2(0,L)}\right\}\leq\\
&-a\mathfrak{R}\left\{ \int_0^L (\texttt{sat}(u)-\texttt{sat}(\tilde{u}))(\overline{u-\tilde{u}})dx\right\}.
\end{split}
\end{equation}
By definition of the saturation function, we get that for all $(s,\tilde{s})\in\mathbb{C}^2$
\begin{equation}
\mathfrak{R}\left\{(\texttt{sat}(s)-\texttt{sat}(\tilde{s}))(\overline{s-\tilde{s}})\right\}\geq 0.
\end{equation}
Thus, thanks to the positivity of $a$, we get that
\begin{equation}
\mathfrak{R}\left\{\langle Au-A\tilde{u},u-\tilde{u} \rangle_{L^2(0,L)}\right\}\leq 0,
\end{equation}
which means that the operator $A$ is dissipative. It concludes the proof of Lemma \ref{lemma-dissipativity}.\end{proof}

In order to conclude the proof of the well-posedness, we have to verify whether the operator $A$ generates a semigroup of contractions which will be denoted in the following by $S(t)$. Following \cite{miyadera1992nl_sg}, we see that it is enough to prove that for all $\lambda>0$ sufficiently small
\begin{equation}
D(A)\subset\texttt{Ran}(I-\lambda A),
\end{equation}
where $\texttt{Ran}$ stands for the range and $I$ for the identity operator. In other words, for each $u\in D(A)$, there exists $\tilde{u}\in D(A)$ such that
\begin{equation}
(I-\lambda A)\tilde{u}=u,
\end{equation}
which is equivalent to prove the existence of a solution of a nonhomogeneous nonlinear equation in the $\tilde{u}$-variable with boundary conditions as considered in the following lemma. 
\begin{nnlemma} 
\label{lemma-existence}
Let us introduce $\tilde{\lambda}:=\frac{1}{\lambda}$. If $a$ is strictly positive and $\tilde{\lambda}>0$, then there exists $\tilde{u}$ solution of
\begin{equation}
\left\{
\begin{split}
&\tilde{\lambda}\tilde{u}+\tilde{u}^{\prime\prime\prime}+\tilde{u}^\prime+ a\texttt{sat}(\tilde{u})=\tilde{\lambda}u,\\
&\tilde{u}(0)=\tilde{u}(L)=\tilde{u}^\prime(L)=0.
\end{split}
\right.
\end{equation}

\end{nnlemma}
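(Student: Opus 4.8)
The plan is to split the left-hand side into its unbounded linear part $\mathcal{L}_0 w := w^{\prime\prime\prime}+w^\prime$, defined on $H^3_L(0,L)$, and the bounded, globally $1$-Lipschitz nonlinear map $w\mapsto a\,\texttt{sat}(w)$, and then to rewrite the problem as the fixed-point equation
\begin{equation*}
(\tilde{\lambda}I+\mathcal{L}_0)\tilde{u}=\tilde{\lambda}u-a\,\texttt{sat}(\tilde{u}).
\end{equation*}
Any $\tilde{u}$ solving this, with the boundary conditions encoded in the domain, is exactly the solution we seek, and it automatically lies in $D(A)=H^3_L(0,L)$.

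The first and main step is to show that, for every $\tilde{\lambda}>0$, the linear operator $\mathcal{M}:=\tilde{\lambda}I+\mathcal{L}_0$ is a bijection from $H^3_L(0,L)$ onto $L^2(0,L)$ with bounded inverse. The same integration by parts carried out in the proof of Lemma \ref{lemma-dissipativity} shows $\mathfrak{R}\langle \mathcal{L}_0 w,w\rangle_{L^2(0,L)}\geq 0$ for $w\in H^3_L(0,L)$ (the first-order term contributes nothing to the real part and the third-order term produces a nonnegative boundary contribution at $x=0$). Hence $\mathfrak{R}\langle \mathcal{M}w,w\rangle_{L^2(0,L)}\geq \tilde{\lambda}\Vert w\Vert_{L^2(0,L)}^2$, which by Cauchy--Schwarz yields the a priori bound $\Vert \mathcal{M}w\Vert_{L^2(0,L)}\geq \tilde{\lambda}\Vert w\Vert_{L^2(0,L)}$. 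This gives injectivity of $\mathcal{M}$ and, once surjectivity is established, the resolvent estimate $\Vert \mathcal{M}^{-1}\Vert\leq 1/\tilde{\lambda}$. Surjectivity amounts to solving the linear third-order two-point boundary value problem $\tilde{\lambda}v+v^{\prime\prime\prime}+v^\prime=g$, $v(0)=v(L)=v^\prime(L)=0$, for arbitrary $g\in L^2(0,L)$; this is the classical linear Korteweg--de Vries resolvent result, obtainable either by a variational (Lax--Milgram type) formulation after a suitable integration, or by a fundamental-solution analysis of the associated ODE, in the spirit of \cite{rosier1997kdv}. I expect this linear solvability to be the true technical heart of the lemma.

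With $\mathcal{M}^{-1}$ in hand, I would define $\Phi(v):=\mathcal{M}^{-1}\bigl(\tilde{\lambda}u-a\,\texttt{sat}(v)\bigr)$, a self-map of $L^2(0,L)$ whose fixed points are precisely the desired solutions. Using the global $1$-Lipschitz property of $\texttt{sat}$ recalled in Lemma \ref{lemma-closedness} together with the resolvent bound, one obtains
\begin{equation*}
\Vert \Phi(v_1)-\Phi(v_2)\Vert_{L^2(0,L)}\leq \frac{a}{\tilde{\lambda}}\,\Vert v_1-v_2\Vert_{L^2(0,L)}.
\end{equation*}
Thus for $\tilde{\lambda}>a$, which is exactly the regime $\lambda<1/a$ of ``sufficiently small $\lambda$'' that the semigroup generation requires, $\Phi$ is a contraction and Banach's fixed-point theorem yields a unique $\tilde{u}\in L^2(0,L)$, lying in $H^3_L(0,L)$ since it belongs to the range of $\mathcal{M}^{-1}$.

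Finally, to reach the statement for every $\tilde{\lambda}>0$ rather than only the large values actually needed below, I would instead invoke maximal monotone operator theory: $\mathcal{L}_0$ is $m$-accretive on $L^2(0,L)$ by the two previous steps, while $w\mapsto a\,\texttt{sat}(w)$ is monotone, everywhere defined and continuous, hence maximal monotone; their sum is then maximal monotone, so $\tilde{\lambda}I+\mathcal{L}_0+a\,\texttt{sat}(\cdot)$ is surjective for all $\tilde{\lambda}>0$, which is the claimed solvability. Both the contraction and the monotonicity arguments hinge on the same linear ingredient, so the surjectivity of $\mathcal{M}$ remains the principal obstacle.
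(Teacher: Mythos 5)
Your argument is correct, but it takes a genuinely different route from the paper's. Both proofs isolate the same linear part: the paper establishes invertibility of $\tilde{\lambda}I+\mathcal{L}_0$ for $\tilde{\lambda}>0$ by citing the spectral properties of the linear KdV operator (compact resolvent, spectrum a discrete subset of $i\mathbb{R}$), which is precisely the linear solvability you flag as the technical heart, and neither proof re-derives it from scratch, so you are on equal footing there --- though I would press you to justify it more concretely than ``Lax--Milgram after a suitable integration,'' since a direct variational formulation is not immediate for a third-order operator; the cleanest route is that $-\mathcal{L}_0$ generates a contraction semigroup (Lumer--Phillips), or the spectral argument the paper borrows. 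The real divergence is the fixed-point step. The paper treats $a\,\texttt{sat}(\cdot)$ as a \emph{bounded} nonlinearity, derives a priori $L^2$ and $H^1_0$ bounds on $z=\mathcal{T}(y)$ via the multipliers $z$ and $xz$, and applies Schauder's theorem on a ball of $H^1_0(0,L)$, compact in $L^2(0,L)$ by Rellich; this works uniformly in $\tilde{\lambda}>0$ but gives existence only. You instead exploit the global $1$-Lipschitz property of $\texttt{sat}$ to make $\Phi$ a contraction for $\tilde{\lambda}>a$ and invoke Banach's theorem: shorter, no compactness or multiplier computations, uniqueness for free, and, as you correctly observe, it covers exactly the regime of small $\lambda$ that the range condition for semigroup generation actually requires. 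Your final appeal to maximal monotone operator theory to recover all $\tilde{\lambda}>0$ is also sound (a monotone, continuous, everywhere-defined perturbation preserves maximal monotonicity, and Minty's theorem then gives surjectivity) and is arguably the cleaner ``textbook'' way to see that $A$ is $m$-dissipative, at the price of invoking heavier machinery than the paper's elementary estimates.
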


\begin{proof}

The proof of this lemma follows from classical technics (see e.g. \cite[Page 179]{miyadera1992nl_sg}) and uses the Schauder fixed-point theorem (see e.g. \cite[Theorem B.19,]{bible_coron}). 

First, following \cite{coroncrepeau2004missed}, let us focus on the spectrum of $\tilde{A}$ which is defined by (\ref{oplin}). Since the operator $\tilde{A}$ has a compact resolvent, its spectrum denoted by $\sigma(\tilde{A})$ consists only of eigenvalues. Futhermore, the spectrum is a discrete subset of $i\mathbb{R}$.  

Since $\tilde{\lambda}$ belongs to $\mathbb{R}_+$, then $\tilde{\lambda}\notin\sigma(\tilde{A})$. Hence $(\tilde{A}-I\tilde{\lambda})$ is invertible and there exists a unique function $z=z(x)$ solution of
\begin{equation}
\label{fixed-point-solve}
\left\{
\begin{split}
&\tilde{\lambda}z+ z^{\prime\prime\prime}+ z^\prime=g,\\
&z(0)=z(L)=z^\prime(L)=0,
\end{split}
\right.
\end{equation}
where $g(y):=-a\texttt{sat}(y)+\tilde{\lambda}u$.

Then we can focus on the map
\begin{equation}
\begin{split}
\mathcal{T}:\: &L^2(0,L)\rightarrow L^2(0,L)\\
& y\longmapsto z=\mathcal{T}(y)
\end{split}
\end{equation}
where $z=\mathcal{T}(y)$ is the unique solution to (\ref{fixed-point-solve}). We define
\begin{equation}
C=\lbrace u\in H_0^1(0,L)\slash\, \Vert u\Vert_{H_0^1(0,L)}\leq M\rbrace
\end{equation}
where $M>0$. From the theorem of Rellich (see \cite[Theorem 9.16, p. 285]{brezis2010functional}), the injection of $H_0^1(0,L)$ in $L^2(0,L)$ is compact, then $C$ is bounded in $H_0^1(0,L)$ and is relatively compact in $L^2(0,L)$. Moreover, it is a closed subset of $L^2(0,L)$. Thus $C$ is a compact subset of $L^2(0,L)$. In order to apply the Schauder theorem, we have to prove that $\mathcal{T}(L^2(0,L))\subset C$ for a suitable choice of $M>0$.  We multiply the first line of (\ref{fixed-point-solve}) by $z$ and then integrate between $0$ and $L$. After some integrations by parts, we get
\begin{equation}
\begin{split}
\tilde{\lambda}\Vert z\Vert_{L^2(0,L)}= &-\int_0^L zz^{\prime\prime\prime}dx-\int_0^L zz^\prime dx\\
&-a\int_0^L \texttt{sat}(y)z dx+\tilde{\lambda}\int_0^L uz dx\\
= &-z^\prime(0)^2-a\int_0^L \texttt{sat}(y)zdx+\tilde{\lambda}\int_0^L uzdx\\
\leq &-a\int_0^L \texttt{sat}(y)zdx+\tilde{\lambda}\int_0^L uzd
\end{split}
\end{equation}
The Young inequality leads us to the following inequality
\begin{equation}
\begin{split}
\tilde{\lambda}\Vert z\Vert^2_{L^2(0,L)}\leq & a\varepsilon_1\Vert\texttt{sat}(y)\Vert^2_{L^2(0,L)}+\frac{a}{\varepsilon_1}\Vert z\Vert_{L^2(0,L)}\\
&+\frac{\tilde{\lambda}}{\varepsilon_2}\Vert z\Vert^2_{L^2(0,L)}+\tilde{\lambda}\varepsilon_2\Vert u\Vert^2_{L^2(0,L)}
\end{split}
\end{equation}
where $\varepsilon_1,\varepsilon_2>0$ are to be chosen later.

The function $\texttt{sat}(\cdot)$ being bounded, we get
\begin{equation}
\left(\tilde{\lambda}-\frac{a}{\epsilon_1}-\frac{\tilde{\lambda}}{\varepsilon_2}\right)\Vert z\Vert^2_{L^2(0,L)}\leq a\varepsilon_1Lu_0^2+\tilde{\lambda}\varepsilon_2\Vert u\Vert^2_{L^2(0,L)}
\end{equation}
We choose $\varepsilon_1$ and $\varepsilon_2$ such that $\alpha:=\left(\tilde{\lambda}-\frac{a}{\epsilon_1}-\frac{\tilde{\lambda}}{\varepsilon_2}\right)>0$. Thus we obtain
\begin{equation}
\label{l3}
\Vert z\Vert^2_{L^2(0,L)}\leq \frac{a\varepsilon_1Lu_0^2}{\alpha}+\frac{\tilde{\lambda}\varepsilon_2}{\alpha}\Vert u\Vert^2_{L^2(0,L)}
\end{equation}
and therefore the $L^2$-norm of $z$ is bounded by a constant.

Now, let us multiply the first line of (\ref{fixed-point-solve}) by $xz$ and then integrate between $0$ and $L$ to get
\begin{equation}
\label{fixed-point-final}
\int_0^L xzz^{\prime\prime\prime}dx+\int_0^L xzz^\prime dx+\tilde{\lambda}\int_0^L xz^2dx=\int_0^L xzg dx
\end{equation}

After some integrations by parts, we get
\begin{equation}
\begin{split}
\label{lastuce}
\int_0^L xzz^{\prime\prime\prime}dx= &-\int_0^L zz^{\prime\prime}dx-\int_0^L xz^{\prime}z^{\prime\prime}dx\\
= &\frac{3}{2}\Vert z^\prime\Vert^2_{L^2(0,L)}
\end{split}
\end{equation}
and
\begin{equation}
\label{lastuce2}
\int_0^L xzz^\prime dx=-\frac{1}{2}\Vert z\Vert^2_{L^2(0,L)}.
\end{equation}
Thus, plugging (\ref{lastuce}) and (\ref{lastuce2}) in (\ref{fixed-point-final}), we obtain
\begin{equation}
\begin{split}
\frac{3}{2}\Vert z^\prime\Vert^2_{L^2(0,L)}= & \frac{1}{2}\Vert z\Vert^2_{L^2(0,L)}-\tilde{\lambda}\int_0^L xz^2dx+\int_0^L xzg\\
\leq & \frac{1}{2}\Vert z\Vert^2_{L^2(0,L)}+\frac{1}{2}\Vert z\Vert^2_{L^2(0,L)}+\frac{L^2}{2}\Vert g\Vert^2_{L^2(0,L)}\\
\leq &\Vert z\Vert^2_{L^2(0,L)}+\frac{L^2}{2}\Vert g\Vert^2_{L^2(0,L)}\\
\leq & M
\end{split}
\end{equation}
where $M$ is a constant which depends only on $u_0$, $L$ and $a$. In this way we see that  $\Vert z^\prime\Vert^2_{L^2(0,L)}$ is also bounded. 

From the Poincar\'e inequality, we have the equivalence between $\Vert z^\prime\Vert^2_{L^2(0,L)}$ and $\Vert z\Vert^2_{H_0^1(0,L)}$. Thus, for any $\lambda > 0$, $a>0$ and $y\in L^2(0,L)$, there exists $M>0$ such that we have $z\in C$, i.e. $\mathcal{T}(L^2(0,L))\subset C$. Then we apply the Schauder theorem (see e.g. \cite[Theorem B.19]{bible_coron}). Hence it concludes the proof of Lemma \ref{lemma-existence}.
\end{proof}

Thus, from \cite[Theorem 4.2]{miyadera1992nl_sg}, using the Lemma \ref{lemma-closedness}, \ref{lemma-dissipativity} and \ref{lemma-existence}, $A$ generates a semigroup of contraction $T(t)$. With \cite[Theorem 3.1]{brezis1973operateurs}, the proof of Theorem \ref{theorem-wp} is achieved.

\subsection{Asymptotic stability (Theorem \ref{theorem-s})}
The Lyapunov function related to (\ref{KdV-saturated}), which we will denote by $E$, is given by
\begin{equation}
E:=\frac{1}{2}\int_0^L y(t,x)^2dx,
\end{equation}
and its derivative with respect to the time variable gives
\begin{equation}
\label{derivative-E-saturation}
\dot{E}\leq -a\int_0^L y(t,x)\texttt{sat}(y(t,x))dx .
\end{equation}

Since $\texttt{sat}$ is an odd function, thus, for all $(t,x)\in\mathbb{R}_+\times [0,L]$

\begin{equation}
y(t,x)\texttt{sat}(y(t,x))\geq 0
\end{equation}

%Let us recall a result that comes from \cite[Chapter 1]{tarbouriech2011book_saturating}.
%\begin{nnlemma}
%\label{lemma_dz}
%\cite{tarbouriech2011book_saturating} For all $s\in\mathbb{R}$, the nonlinearity $\varphi$ satisfies \begin{equation*}
%\varphi(s)(\varphi(s)+s)\leq 0.
%\end{equation*}
%\end{nnlemma}
%
%By applying Lemma \ref{lemma_dz} with $s:=y(t,x)$, for all $b>0$, we get that
%\begin{equation}
%\begin{split}
%\dot{E}\leq &-2a\int_0^L y(t,x)^2 dx-2a\int_0^L y(t,x)\varphi(y(t,x))dx\\
%&-2b\int_0^L (\varphi(y(t,x))^2+\varphi(y(t,x))y(t,x))dx
%\end{split}
%\end{equation}
%that means that we have
%\begin{equation}
%\dot{E}\leq -2\int_0^L \begin{bmatrix}
%y(t,x)\\
%\varphi(y(t,x))
%\end{bmatrix}^T\mathcal{A}\begin{bmatrix}
%y(t,x)\\
%\varphi(y(t,x))
%\end{bmatrix}dx
%\end{equation}
%with $\mathcal{A}$ defined by
%\begin{equation}
%\mathcal{A}=\begin{bmatrix}
%a & \frac{a+b}{2}\\
%\frac{a+b}{2} & b
%\end{bmatrix}.
%\end{equation}
%
%First, let us select $b$ such that $b\neq a$. Thus we have $\texttt{det}(\mathcal{A})$ strictly negative. The two eigenvalues of the matrix have different signs. It does not help us to find a bound to the function $\dot{E}$.
%
%Thus we set $a=b$. Hence, the matrix $\mathcal{A}$ is rewritten as follows $\mathcal{A}=\begin{bmatrix}
%a & a\\
%a & a
%\end{bmatrix}$ that is semi-definite positive (because its eigenvalues are $0$ and $2a$).

Therefore we get
\begin{equation}
\dot{E}\leq 0,
\end{equation}
which means that, for all initial conditions in $D(A)$, the solutions of (\ref{KdV-saturated}) are stable. The attractivity has to be inspected too in order to finish the proof of the stability. 

Since we are in an infinite dimensional context, using the LaSalle's Invariance Principle needs us to check whether the trajectories are compact. This precompactness is a corollary of the following lemma (which is very similar to \cite[Lemma 2]{dandreanovel1994mcss}, where a wave equation is considered). 

\begin{nnlemma}
\label{lemma-precompacity}
The canonical embedding from D(A), equipped with the graph norm, into $L^2(0,L)$ is compact.
\end{nnlemma}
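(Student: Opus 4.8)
The graph norm on $D(A)=H^3_L(0,L)$ is
\begin{equation}
\Vert u\Vert_{\mathrm{gr}}^2:=\Vert u\Vert_{L^2(0,L)}^2+\Vert Au\Vert_{L^2(0,L)}^2 .
\end{equation}
The plan is to show that this norm dominates the full $H^3(0,L)$-norm, so that any set bounded in the graph norm is bounded in $H^3(0,L)$; the conclusion then follows immediately from the Rellich compactness theorem (already invoked in the proof of Lemma \ref{lemma-existence}), which guarantees that bounded subsets of $H^3(0,L)$ are relatively compact in $L^2(0,L)$.

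The decisive observation is that the saturation is bounded: since $\vert\texttt{sat}(s)\vert\le u_0$, one has $\Vert a\,\texttt{sat}(u)\Vert_{L^2(0,L)}\le a u_0\sqrt{L}$ for every $u$, a bound independent of $u$. Rewriting the definition of $A$ as
\begin{equation}
u'''=-u'-a\,\texttt{sat}(u)-Au,
\end{equation}
I would first estimate
\begin{equation}
\Vert u'''\Vert_{L^2(0,L)}\le \Vert u'\Vert_{L^2(0,L)}+a u_0\sqrt{L}+\Vert Au\Vert_{L^2(0,L)}.
\end{equation}
The only term not yet controlled by $\Vert u\Vert_{\mathrm{gr}}$ is $\Vert u'\Vert_{L^2(0,L)}$, which I would absorb using the intermediate-derivative interpolation inequality on the bounded interval $(0,L)$, namely
\begin{equation}
\Vert u'\Vert_{L^2(0,L)}\le \tfrac12\Vert u'''\Vert_{L^2(0,L)}+C\Vert u\Vert_{L^2(0,L)}
\end{equation}
for some $C=C(L)>0$ (a consequence of Gagliardo--Nirenberg plus Young's inequality). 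Substituting and moving $\tfrac12\Vert u'''\Vert_{L^2(0,L)}$ to the left-hand side bounds $\Vert u'''\Vert_{L^2(0,L)}$ in terms of $\Vert u\Vert_{\mathrm{gr}}$ plus the fixed constant $au_0\sqrt{L}$. The intermediate norms $\Vert u'\Vert_{L^2(0,L)}$ and $\Vert u''\Vert_{L^2(0,L)}$ are then controlled by the same interpolation inequality, so that $\Vert u\Vert_{H^3(0,L)}$ is bounded in terms of $\Vert u\Vert_{\mathrm{gr}}$.

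Consequently a sequence bounded in the graph norm is bounded in $H^3(0,L)$, hence, by Rellich, admits a subsequence converging in $L^2(0,L)$; this is exactly the asserted compactness of the embedding. Equivalently, one may argue structurally: the perturbation $u\mapsto -a\,\texttt{sat}(u)$ being bounded in $L^2(0,L)$, the graph norms of $A$ and of the linear operator $\tilde A$ of \eqref{oplin} are equivalent on $D(A)=D(\tilde A)$, and since $\tilde A$ was noted to have compact resolvent, the embedding of its domain (with the graph norm) into $L^2(0,L)$ is compact.

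The main point requiring care is the separate control of the top-order derivative $\Vert u'''\Vert_{L^2(0,L)}$: a naive integration by parts to isolate it produces boundary terms at $x=0$, since the data prescribe only $u(0)=u(L)=u'(L)=0$. I would therefore avoid that route and rely on the interpolation inequality instead, whose use is legitimate precisely because $\texttt{sat}$ is bounded and the first-order term $u'$ can be absorbed into $u'''$. Once $H^3$-boundedness is secured, no further difficulty remains.
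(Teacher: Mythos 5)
Your proof is correct, but it follows a genuinely different route from the paper's. The paper never bounds the full $H^3$-norm: it only extracts an $H^1_0$-bound from the graph norm, via the chain $\Vert u\Vert^2_{D(A)}\gtrsim\Vert u'''+u'\Vert^2_{L^2(0,L)}$ (using $\Vert a\,\texttt{sat}(u)\Vert_{L^2(0,L)}\le a\Vert u\Vert_{L^2(0,L)}$) followed by an algebraic bootstrap in which $\Vert u'\Vert^2_{L^2(0,L)}$ is expanded against $u'''$ and $xu$ and the boundary-condition-dependent identity $\int_0^L xu'''u\,dx=\tfrac32\Vert u'\Vert^2_{L^2(0,L)}$ (equation (\ref{lastuce})) is used to absorb the top-order terms; compactness then comes from $H^1_0(0,L)\hookrightarrow L^2(0,L)$. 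You instead control the top derivative directly, absorbing $\Vert u'\Vert_{L^2(0,L)}$ via the Ehrling/Gagliardo--Nirenberg interpolation inequality, and conclude from $H^3(0,L)\hookrightarrow L^2(0,L)$; your closing remark that the estimate is inhomogeneous (the additive constant $au_0\sqrt L$) is handled correctly, since only boundedness of graph-norm-bounded sets is needed. Your alternative structural argument is also sound and arguably the cleanest: since $|\texttt{sat}(s)|\le|s|$, the graph norms of $A$ and $\tilde A$ are equivalent on $D(A)=D(\tilde A)$, and compactness of the resolvent of $\tilde A$ (already asserted in the proof of Lemma \ref{lemma-existence}) is equivalent to compactness of the embedding of its domain with the graph norm. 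What each approach buys: the paper's argument is elementary and self-contained but tied to the specific boundary conditions through (\ref{lastuce}), and yields only the (sufficient) $H^1$-bound; yours gives the stronger $H^3$-bound, needs no boundary conditions for the interpolation step, and generalizes immediately to any bounded (or Lipschitz) perturbation of an operator with compact resolvent. Your caution about the spurious boundary term at $x=0$ is well placed, although the paper's particular integration by parts does go through because $u(0)=u(L)=u'(L)=0$ kills all boundary contributions in that identity.
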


\begin{proof}
Before proving this lemma, recall that its statement is equivalent to prove, for each sequence in $D(A)$, which is bounded with the graph norm, that it exists a subsequence that (strongly) converges in $L^2(0,L)$.

Let us recall the definition of the graph norm
\begin{equation}
\begin{split}
\Vert u\Vert^2_{D(A)}:= &\Vert u\Vert^2_{L^2(0,L)}+\Vert Au\Vert^2_{L^2(0,L)}\\
=&\int_0^L \left(|u|^2+|-u^{\prime\prime\prime}-u^{\prime}-a\texttt{sat}(u)|^2\right)dx\\
=&\int_0^L \left(|u|^2+|u^{\prime\prime\prime}+u^{\prime}+a\texttt{sat}(u)|^2\right)dx.
\end{split}
\end{equation}

Since for all $(s,\tilde{s})\in\mathbb{C}^2$, $|s+\tilde{s}|^2\leq 2|s|^2+2|\tilde{s}|^2$, we get the following two inequalities
\begin{equation}
\label{precompactness.1}
\Vert u\Vert^2_{D(A)}\geq \Vert u\Vert^2_{L^2(0,L)}
\end{equation}
and
\begin{equation}
\label{precompactness.2}
\begin{split}
\Vert u\Vert^2_{D(A)}&\geq \min\left(1,\frac{1}{a}\right)\int_0^L |-a\texttt{sat}(u)|^2dx\\
&+\min\left(1,\frac{1}{a}\right)\int_0^L |u^{\prime\prime\prime}+u^\prime+a\texttt{sat}(u)|^2dx\\
&\geq \min\left(\frac{1}{2},\frac{1}{2a}\right)\int_0^L |u^{\prime\prime\prime}+u^\prime|^2 dx.
\end{split}
\end{equation}

Noticing that $\Vert u^{\prime\prime\prime}\Vert^2_{L^2(0,L)}=\Vert u^{\prime\prime\prime}+u^\prime-u^\prime\Vert^2_{L^2(0,L)}$, we have
\begin{equation}
\label{precompactness.3}
\Vert u^{\prime\prime\prime}\Vert^2_{L^2(0,L)} \leq  2\Vert u^{\prime\prime\prime}+u^\prime\Vert^2_{L^2(0,L)}+2\Vert u^\prime\Vert^2_{L^2(0,L)},
\end{equation}
and using that $\Vert u^\prime\Vert^2_{L^2(0,L)}=\Vert u^\prime+u^{\prime\prime\prime}-u^{\prime\prime\prime}+xu-xu\Vert^2_{L^2(0,L)}$, we obtain
\begin{equation*}
\begin{split}
\Vert u^\prime\Vert^2_{L^2(0,L)}\leq & 2\Vert u^\prime+u^{\prime\prime\prime}\Vert^2_{L^2(0,L)}+2\Vert u^{\prime\prime\prime}-xu+xu\Vert^2_{L^2(0,L)}\\
\leq & 2\Vert u^\prime+u^{\prime\prime\prime}\Vert^2_{L^2(0,L)}+4\Vert u^{\prime\prime\prime}-xu\Vert^2_{L^2(0,L)}\\
&+4\Vert xu\Vert^2_{L^2(0,L)}\\
\leq & 2\Vert u^\prime+u^{\prime\prime\prime}\Vert^2_{L^2(0,L)}+4\Vert u^{\prime\prime\prime}\Vert^2_{L^2(0,L)}\\
&-8\int_0^L xu^{\prime\prime\prime}udx+8\Vert xu\Vert^2_{L^2(0,L)}.
\end{split}
\end{equation*} 

From (\ref{lastuce}), we get
\begin{equation*}
\int_0^L xu^{\prime\prime\prime}udx=\frac{3}{2}\Vert u^\prime\Vert^2_{L^2(0,L)}
\end{equation*}
and therefore 
\begin{equation}
\label{precompactness.4}
\begin{split}
\Vert u^\prime\Vert^2_{L^2(0,L)}\leq & 2\Vert u^\prime+u^{\prime\prime\prime}\Vert^2_{L^2(0,L)}+4\Vert u^{\prime\prime\prime}\Vert^2_{L^2(0,L)}\\
&-12\Vert u^{\prime}\Vert^2_{L^2(0,L)}+8\Vert xu\Vert^2_{L^2(0,L)}.
\end{split}
\end{equation}

Thus:

\begin{equation}
\label{precompactness.5}
\begin{split}
13\Vert u^\prime\Vert^2_{L^2(0,L)}\leq & 2\Vert u^\prime+u^{\prime\prime\prime}\Vert^2_{L^2(0,L)}+4\Vert u^{\prime\prime\prime}\Vert^2_{L^2(0,L)}\\
&+8L^2\Vert u\Vert^2_{L^2(0,L)}
\end{split}
\end{equation} 

Plugging inequality (\ref{precompactness.3}) in (\ref{precompactness.5}), we have

\begin{equation*}
\begin{split}
13\Vert u^\prime\Vert^2_{L^2(0,L)}\leq & 2\Vert u^\prime+u^{\prime\prime\prime}\Vert^2_{L^2(0,L)}\\
&+4\left(2\Vert u^{\prime\prime\prime}+u^\prime\Vert^2_{L^2(0,L)}+2\Vert u^\prime\Vert^2_{L^2(0,L)}\right)\\
& +8L^2\Vert u\Vert^2_{L^2(0,L)}\\
\leq & 10\Vert u^\prime+u^{\prime\prime\prime}\Vert^2_{L^2(0,L)}+8\Vert u^\prime\Vert^2_{L^2(0,L)}\\
&+8L^2\Vert u\Vert^2_{L^2(0,L)}.
\end{split}
\end{equation*}
and therefore
\begin{equation}
\Vert u^\prime\Vert^2_{L^2(0,L)}\leq 2\Vert u^\prime+u^{\prime\prime\prime}\Vert^2_{L^2(0,L)}+\frac{8L^2}{5}\Vert u\Vert^2_{L^2(0,L)}
\end{equation}
Considering Equations (\ref{precompactness.1}) and (\ref{precompactness.2}), it leads us to the following inequality, for all $u\in D(A)$
\begin{equation}
\label{boundary-graph}
\Vert u^\prime\Vert^2_{L^2(0,L)}\leq \Delta \Vert u\Vert^2_{D(A)}
\end{equation}
where $\Delta$ is a term which depends on $L$ and $a$.

Thus, if we consider now a sequence $\{u_n\}_{n\in\mathbb{N}}$ in $D(A)$ bounded for the graph norm of $D(A)$, we have from (\ref{boundary-graph}) that this sequence is bounded in $H^1_0(0,L)$. Since the canonical embedding from $H^1_0(0,L)$ to $L^2(0,L)$ is compact, there exists a subsequence still denoted $\{u_n\}_{n\in\mathbb{N}}$ such that $u_n\rightarrow u$ in $L^2(0,L)$. Thus $u$ belongs to $L^2(0,L)$, which concludes the lemma. 
\end{proof}

Now we apply the LaSalle's Invariance Principle. %Using the dissipativity (see Lemma \ref{lemma-dissipativity}), 

Using the fact that $A$ generates a semi-group of contraction, then from \cite[Th\'eor\`eme 3.1, Page 54]{brezis1973operateurs}, we get, for all $t\geq 0$ and for all $y(0,\cdot)\in D(A)$,

\begin{equation}
\Vert y(t,\cdot)\Vert_{L^2(0,L)}\leq \Vert y(0,\cdot)\Vert_{L^2(0,L)}
\end{equation} 

and

\begin{equation}
\Vert Ay(t,\cdot)\Vert_{L^2(0,L)}\leq \Vert Ay(0,\cdot)\Vert_{L^2(0,L)}.
\end{equation}

Therefore, thanks to Lemma \ref{lemma-precompacity}, we see that the trajectory $\lbrace v(t)=S(t)v_0,\: t\geq 0\rbrace$ is precompact in $L^2(0,L)$, then the $\omega$-limit set $w[(y(0,\cdot))]\subset D(A)$, is not empty and invariant to the nonlinear semigroup $S(t)$ (see \cite[Theorem 3.1]{slemrod1989mcss}).

Let us consider a strong solution such that $\dot{E}(t)=0$, for all $t\geq 0$. It follows from (\ref{derivative-E-saturation}) that $y(t,x)=0$ for almost $x$ in $(0,L)$. %and thus the absolute value of $y$ is smaller than the level of saturation map, i.e. $|y|\leq u_0$. Thus, for sufficiently large time, $y$ is a solution to the linear system (\ref{linear-KdV-controlled}) which is asymptotically stable. 
Therefore the convergence property (\ref{convergence-property}) holds along the strong solutions to the nonlinear equation (\ref{KdV-saturated}). 

Using the density of $D(A)$ and the existence of weak solutions, we end the proof by extending the result to any initial condition in $L^2(0,L)$. 

%\begin{nnremark}
%The result still holds for feedbacks $f(t,x)=a.h(y(t,x))$ where $h$ satisfies for all $s\in\mathbb{R}$
%\begin{itemize}
%\item[(i)] $h(-s)=-h(s)$;
%\item[(ii)] $|h(s)|\leq s$;
%\item[(iii)]$\Vert h(s)\Vert^2_{L^2(0,L)}\leq K$, where $K>0$ is a constant;
%\item[(iv)] $h$ is globally Lipschitz.
%\end{itemize}
%\end{nnremark} 

\section{Simulation}\label{simulation}

Let us discretize the PDE (\ref{KdV-saturated}) by means of finite difference method (see e.g. \cite{nm_KdV} for an introduction on the numerical scheme of a generalized Korteweg-de Vries equation). The time and the space steps are chosen such that the stability condition of the numerical scheme is satisfied.

We choose $L=2\pi$, $y_0(x)=100(1-\cos(x))$ for all $x\in [0,2\pi]$ and $a=1$. Let us numerically compute the solution of (\ref{KdV-saturated}). On Figure \ref{figure1}, there is no saturation in the dynamics. On Figure \ref{figure2}, there is a saturation with a level $u_0=1$. On Figure \ref{figure3}, the feedback law is saturated with a level $u_0=3$. Figures \ref{figure4} and \ref{figure5} illustrate the evolution of the Lyapunov function $E$ with respect to the time (without saturation and with a saturation level equals to $3$). 

\begin{figure}[ht]
  \includegraphics[scale=0.6]{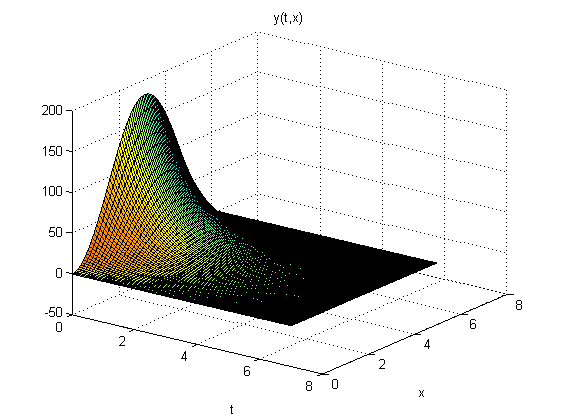}
\caption{Solution y(t,x) with a feedback law without saturation}
\label{figure1}
\end{figure}

\begin{figure}[ht]
  \includegraphics[scale=0.6]{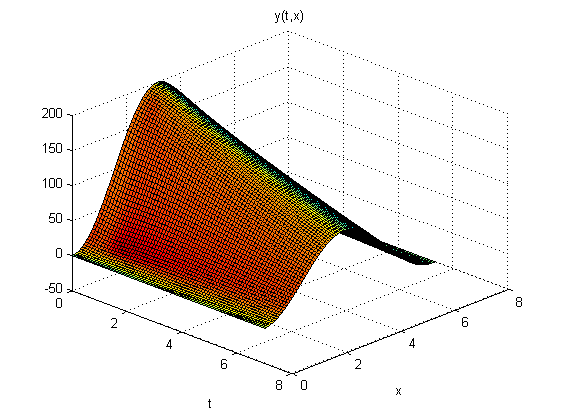}
\caption{Solution y(t,x) with a saturated feedback law and $u_0=1$.}
\label{figure2}
\end{figure}

\begin{figure}[ht]
  \includegraphics[scale=0.6]{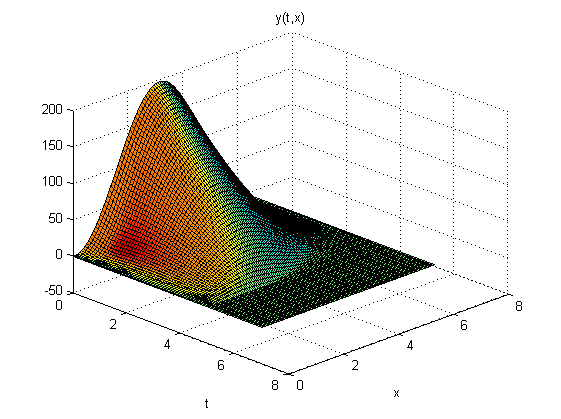}
\caption{Solution y(t,x) with a saturated feedback law and $u_0=3$.}
\label{figure3}
\end{figure}

\begin{figure}
   \begin{minipage}[c]{.46\linewidth}
      \includegraphics[scale=0.33]{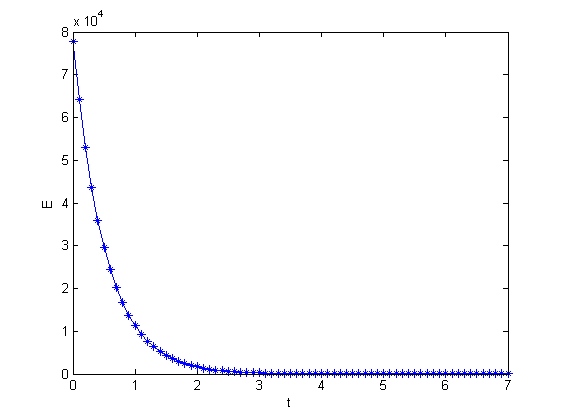}
      \caption{Time evolution of the Lyapunov function $E$ without saturation.}
      \label{figure4}
   \end{minipage} \hfill
   \begin{minipage}[c]{.46\linewidth}
      \includegraphics[scale=0.33]{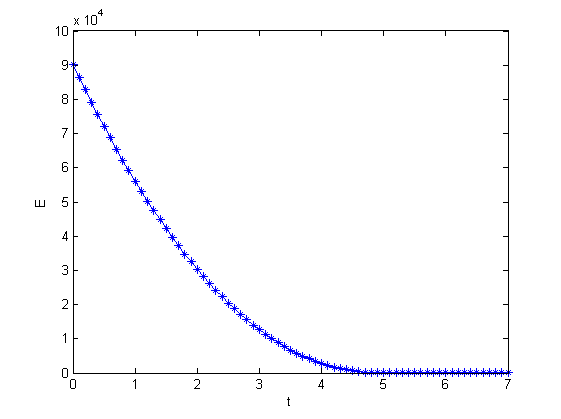}
      \caption{Time evolution of the Lyapunov function $E$ with a saturation $u_0=3$.}
      \label{figure5}
   \end{minipage}
\end{figure}

\section{Conclusion}\label{part5}
In this paper, we have studied the well-posedness and the asymptotic stability of a linear Korteweg-de Vries equation with a saturated distributed control. The well-posedness issue has been tackled by using the nonlinear semigroup theory and we proved the stability by using a sector condition and Lyapunov theory for infinite dimensional system. We illustrate our result on some simulations, which show that the smaller is the saturation level, the slower is the convergence to zero.

To conclude, let us state some questions arising in this context:\\
1. Can we extend our theorems to the nonlinear Korteweg-de Vries equation? \\
2. As mentioned in the introduction, even if the internal control (without any constraints) acts only on a part of the domain, the stability still holds. Is it true with a saturated control? \\
3. Can we recover the exponential stability with the saturated input? \\
4. Can we apply the same method for other partial differential equations? An interesting model could be the one-dimensional Kuramoto-Sivashinky equation.

\bibliographystyle{plain}
\bibliography{Bibliographie}

\end{document}